\theoremstyle{plain}
\newtheorem{theorem}{Theorem}[section]
\newtheorem{corollary}{Corollary}[section]
\newtheorem{lemma}{Lemma}[section]
\theoremstyle{definition}
\newtheorem{definition}{Definition}[section]
\newtheorem{example}{Example}[section]
\newtheorem{remark}{Remark}[section]
\numberwithin{equation}{section}
\newcommand*{\Zset}{\mathbb{Z}}  
\newcommand*{\Rset}{\mathbb{R}}  
\newcommand*{\Cset}{\mathbb{C}}
\newcommand*{\mx}[1]{\bm{#1}} 
\begin{document}
\title{\textbf{On the positive definiteness and eigenvalues of meet and join matrices}}
\author{}
\date{19.9.2012}
\maketitle
\begin{center}
\textsc{Mika Mattila$^*$ and Pentti Haukkanen}\\
School of Information Sciences\\
FI-33014 University of Tampere, Finland\\[5mm]
\end{center}
\begin{abstract}
In this paper we study the positive definiteness of meet and join matrices using a novel approach. When the set $S_n$ is meet closed, we give a sufficient and necessary condition for the positive definiteness of the matrix $(S_n)_f$. From this condition we obtain some sufficient conditions for positive definiteness as corollaries. We also use graph theory and show that by making some graph theoretic assumptions on the set $S_n$ we are able to reduce the assumptions on the function $f$ while still preserving the positive definiteness of the matrix $(S_n)_f$. Dual theorems of these results for join matrices are also presented. As examples we consider the so-called power GCD and power LCM matrices as well as MIN and MAX matrices. Finally we give bounds for the eigenvalues of meet and join matrices in cases when the function $f$ possesses certain monotonic behaviour.
\end{abstract}
\emph{Key words and phrases:}\\Meet matrix, Join matrix, GCD matrix, LCM matrix, Smith determinant\\
\emph{AMS Subject Classification:} 11C20, 15B36, 06B99\\[5mm]
\emph{$^\ast$Corresponding\ author.}\ \textit{Tel.:}\ +358\ 50\ 318\ 5881,\ \textit{fax:}\ +358\ 3\ 219\ 1001\\
\emph{E-mail addresses:}\ mika.mattila@uta.fi\ (M. Mattila),\\\hspace*{32mm} pentti.haukkanen@uta.fi\ (P. Haukkanen)

\newpage

\section{Introduction}

The research of GCD and LCM matrices was initiated by H. J. S. Smith \cite{S} in 1875 when he studied the determinant of the $n\times n$ matrix in which the $ij$ element is the greatest common divisor $(i,j)$ of $i$ and $j$. He also considered the $n\times n$ matrix with the least common multiple $[i,j]$ of $i$ and $j$ as its $ij$ element. During the next century the determinants of GCD type matrices were a topic of interest for many number theorists and linear algebraists (see the references in \cite{HWS}; the two articles \cite{L} and \cite{Wi} by Lindström and Wilf are especially relevant). In 1989 Beslin and Ligh \cite{BL} initiated a new wave of more intense research of GCD matrices, which soon led to poset-theoretic generalizations of GCD matrices. Rajarama Bhat \cite{RB} gave the definition of meet matrix, and Haukkanen \cite{H} was the first to study these matrices systematically. Join matrices were defined later by Korkee and Haukkanen \cite{KH}. 

Over the years many authors have considered the positive definiteness of GCD, LCM, meet and join matrices. In 1989 Beslin and Ligh \cite{BeL} showed that the GCD matrix $(S)$ of the set $S=\{x_1,\ldots,x_n\}$, in which the $ij$ element is $(x_i,x_j)$, is positive definite. Four years later Bourque and Ligh \cite{BL2} proved that if $f$ is an arithmetical function such that
\[
d\,|\,x_i\text{ for some }x_i\in S\Rightarrow(f*\mu)(d)>0,
\]
then the GCD matrix $(S)_f$ with $f((x_i,x_j))$ as its $ij$ element is positive definite. In \cite{BL} Beslin and Ligh report results concerning the positive definiteness of GCD matrices with respect to generalized Ramanujan's sums. In 2001 Korkee and Haukkanen \cite{KH2} gave a sufficient condition for positive definiteness of meet matrices, and in \cite{KH} they presented a similar condition for join matrices. A couple of years later Altinisik et al. \cite{AST} obtained a sufficient and necessary condition for positive definiteness of a matrix closely related to meet matrices. At the same time Ovall \cite{O} went back to GCD and LCM matrices and showed that GCD and certain reciprocal matrices are positive definite, whereas some reciprocal matrices and certain LCM matrices are indefinite. In 2006 Bhatia \cite{Bh2} showed once again that the usual GCD matrix is infinitely divisible and therefore positive definite. Later Bhatia \cite{Bh} also studied certain MIN matrices and presented six proofs for their positive definiteness (it should be noted that MIN matrices can easily be seen as special cases of meet matrices). 

There are also some results for the eigenvalues of GCD-type matrices to be found in the literature. Wintner \cite{W} published results concerning the largest eigenvalue of the $n\times n$ matrix having
\[
\left(\frac{(i,j)}{[i,j]}\right)^\alpha
\]
as its $ij$ entry and subsequently Lindqvist and Seip \cite{LS} investigated the asymptotic behaviour of the smallest and largest eigenvalue of the same matrix. More recently Hilberdink \cite{Hi} and also Berkes and Weber \cite{BW} addressed this same topic from an analytical perspective. The first paper concerning the eigenvalues of proper GCD matrices was by Balatoni \cite{B} as he considered the eigenvalues of the classical Smith's GCD matrix.

One way to obtain information about the eigenvalues of GCD type matrices is to study the norms of these matrices. The $O$ estimates of the norms have been studied in many papers, see \cite{A, ATH2, Ha1, Ha3, Ha2}. Hong and Loewy \cite{HL, HL2} studied the asymptotic behaviour of a special kind of GCD matrices, Altinisik \cite{A2} provides information about the eigenvalues of GCD matrices, a paper by Hong and Enoch Lee \cite{HE} addresses the eigenvalues of reciprocal LCM matrices and there is also one paper about the eigenvalues of meet and join matrices by Ilmonen et al. \cite{IHM}. 

In this paper we provide new information about the positive definiteness and the eigenvalues of meet and join matrices. The notations and most of the concepts are defined in Section 2. Section 3 contains some new characterizations and key examples of positive definite meet and join matrices. In Section 4 we make use of graph theory and study the positive definiteness of meet and join matrices from this new graph theoretic perspective. In Section 5 we provide upper bounds for all the eigenvalues of meet and join matrices in which the function $f$ evinces certain monotonic behaviour. 

\section{Preliminaries}

Throughout this paper $(P,\preceq)$ is an infinite but locally finite lattice, $f:P\to\Rset$
is a real-valued function on $P$ and $(x_n)_{n=1}^\infty$ is an infinite sequence of distinct elements of $P$ such that
\begin{equation}\label{eq:condition}
x_i\preceq x_j\Rightarrow i\leq j.
\end{equation}

For every $n\in\Zset_+$, let $S_n=\{x_1,x_2,\ldots,x_n\}$. The set $S_n$ is said to be \emph{meet closed} if $x_i\wedge x_j\in S_n$ for all $x_i,x_j\in S_n$, in other words, the structure $(S_n,\preceq)$ is a meet semilattice. The concept of \emph{join closed set} is defined dually.

The $n\times n$ matrix having $f(x_i\wedge x_j)$ as its $ij$ element is the \emph{meet matrix} of the set $S_n$ with respect to $f$ and is denoted by $(S_n)_f$. Similarly, the $n\times n$ matrix having $f(x_i\vee x_j)$ as its $ij$ element is the \emph{join matrix} of the set $S_n$ with respect to $f$ and is denoted by $[S_n]_f$. When $(P,\preceq)=(\Zset_+,|)$ the matrices $(S_n)_f$ and $[S_n]_f$ are referred to as the GCD and LCM matrices of the set $S_n$ with respect to $f$.

Let $D_n=\{d_1,d_2,\ldots,d_{m_n}\}$ be any finite subset of $P$ containing all the elements $x_i\wedge x_j$, where $x_i,x_j\in S_n$, and having its elements arranged so that
\[
d_i\preceq d_j\Rightarrow i\leq j.
\]
Next we define the function $\Psi_{D_n,f}$ on $D_n$ inductively as
\begin{equation}
\Psi_{D_n,f}(d_k)=f(d_k)-\sum_{d_v\prec d_k}\Psi_{D_n,f}(d_v),
\label{eq:Psi1}
\end{equation}
or equivalently
\begin{equation}
f(d_k)=\sum_{d_v\preceq d_k}\Psi_{D_n,f}(d_v).
\label{eq:Psi2}
\end{equation}
Thus we have
\begin{equation}
\Psi_{D_n,f}(d_k)=\sum_{d_v\preceq d_k}f(d_v)\mu_{D_n}(d_v,d_k),
\label{eq:Psi3}
\end{equation}
where $\mu_{D_n}$ is the Möbius function of the poset $(D_n,\preceq)$, see \cite[Section IV.1]{Aig} and \cite[Proposition 3.7.1]{St}.

Let $E_{D_n}$ denote the $n\times m_n$ matrix defined as
\begin{equation}\label{eq:E}
(e_{D_n})_{ij}=\left\{
 \begin{array}{cc}
    1 & \textrm{if }d_{j}\preceq x_{i}\textrm{,} \\
    0 & \textrm{otherwise.}
 \end{array}
\right.
\end{equation}
The matrix $E_{D_n}$ may be referred to as the incidence matrix of the set $D_n$ with respect to the set $S_n$ and the partial ordering $\preceq$.

The set $D_n$, the function $\Psi_{D_n,f}$ and the matrix $E_{D_n}$ are needed when considering the matrix $(S_n)_f$. Next we define the dual concepts which we use in the study of the matrix $[S_n]_f$.

Let $B_n=\{b_1,b_2,\ldots,b_{l_n}\}$ be any finite subset of $P$ containing all the elements $x_i\vee x_j$ with $x_i,x_j\in S_n$ and having its elements arranged so that
\[
b_i\preceq b_j\Rightarrow i\leq j.
\]
We define the function $\Phi_{B_n,f}$ on $B_n$ inductively as
\begin{equation}
\Phi_{B_n,f}(b_k)=f(b_k)-\sum_{b_k\prec b_v}\Phi_{B_n,f}(b_v),
\label{eq:Phi1}
\end{equation}
or equivalently
\begin{equation}
f(b_k)=\sum_{b_k\preceq b_v}\Phi_{B_n,f}(b_v).
\label{eq:Phi2}
\end{equation}
Thus we have
\begin{equation}
\Phi_{B_n,f}(b_k)=\sum_{b_k\preceq b_v}f(b_v)\mu_{B_n}(b_k,b_v),
\label{eq:Phi3}
\end{equation}
where $\mu_{B_n}$ is the Möbius function of the poset $(B_n,\preceq)$.

Let $E_{B_n}$ denote the $n\times l_n$ matrix defined as
\begin{equation}\label{eq:E2}
(e_{B_n})_{ij}=\left\{
 \begin{array}{cc}
    1 & \textrm{if }b_{j}\succeq x_{i}\textrm{,} \\
    0 & \textrm{otherwise.}
 \end{array}
\right.
\end{equation}
We refer to the matrix $E_{B_n}$ as the incidence matrix of the set $B_n$ with respect to the set $S_n$ and the partial ordering $\succeq$.

\begin{remark}\label{re:order}
If we are only interested in the positive definiteness and eigenvalues of meet and join matrices, then the condition \eqref{eq:condition} is, in fact, not necessary but can still be made without restricting generality. If $S_n$ does not satisfy the condition \eqref{eq:condition} and $S_n'$ is a set obtained from $S_n$ by rearranging its elements so that \eqref{eq:condition} holds, then there exists a permutation matrix $P$ such that
\[
(S_n')_f=P(S_n)_fP^T=P(S_n)_fP^{-1}.
\]
Thus the matrices $(S_n')_f$ and $(S_n)_f$ are similar and therefore have the same eigenvalues, positive definiteness properties etc. 
\end{remark}

It is well known (see, for example \cite{ATH, MH}) that adopting the above notations the matrices $(S_n)_f$ and $[S_n]_f$ can be factored as
\begin{equation}\label{eq:fac}
(S_n)_f=E_{D_n}\Lambda_{D_n,f}E_{D_n}^T\quad\text{and}\quad [S_n]_f=E_{B_n}\Delta_{B_n,f}E_{B_n}^T,
\end{equation}
where $$\Lambda_{D_n,f}=\text{diag}(\Psi_{D_n,f}(d_1),\Psi_{D_n,f}(d_2),\ldots,\Psi_{D_n,f}(d_{m_n}))$$ and $$\Delta_{B_n,f}=\text{diag}(\Phi_{B_n,f}(b_1),\Phi_{B_n,f}(b_2),\ldots,\Phi_{B_n,f}(b_{l_n})).$$ By using the first factorization in a case when the set $S_n$ is meet closed, it is easy to show (see \cite[Theorem 4.2]{ATH}) that 
\begin{equation}\label{eq:det1}
\det (S_n)_f=\Psi_{S_n,f}(x_1)\Psi_{S_n,f}(x_2)\cdots\Psi_{S_n,f}(x_n).
\end{equation}
Similarly, when the set $S_n$ is join closed we have
\begin{equation}\label{eq:det2}
\det [S_n]_f=\Phi_{S_n,f}(x_1)\Phi_{S_n,f}(x_2)\cdots\Phi_{S_n,f}(x_n)
\end{equation}
(see \cite[Theorem 4.2]{MH}). In the next section these determinant formulas appear also to be useful when considering the positive definiteness of meet and join matrices. 

\section{On the positive definiteness of meet and join matrices}

We begin our study by considering the positive definiteness of the matrix $(S_n)_f$ in case when the set $S_n$ is meet closed. Under these circumstances we are able to give sufficient and necessary conditions for positive definiteness of the matrix $(S_n)_f$. Theorem \ref{th:posdefmeetcl} is also closely related to Theorem 5.1 in \cite{AST}.

\begin{theorem}\label{th:posdefmeetcl}
If the set $S_n$ is meet closed, then the matrix $(S_n)_f$ is positive definite if and only if $\Psi_{S_n,f}(x_i)>0$ for all $i=1,2,\ldots,n$.
\end{theorem}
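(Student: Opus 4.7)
The plan is to exploit the factorization \eqref{eq:fac} directly, with the special choice $D_n=S_n$, which is legitimate because $S_n$ is meet closed and therefore already contains all pairwise meets. This reduces the question of positive definiteness of $(S_n)_f$ to that of the diagonal matrix $\Lambda_{S_n,f}$ via Sylvester's law of inertia, which is exactly the statement we want.

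Concretely, I would first argue that when $D_n=S_n$ the incidence matrix $E_{S_n}$ is an $n\times n$ lower triangular matrix with ones on the diagonal. Indeed, $(e_{S_n})_{ii}=1$ because $x_i\preceq x_i$, and if $j>i$ then $(e_{S_n})_{ij}=1$ would require $x_j\preceq x_i$, which by the ordering condition \eqref{eq:condition} forces $j\leq i$, a contradiction. Hence $E_{S_n}$ has determinant $1$ and is in particular invertible.

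Next, I would apply \eqref{eq:fac} in the form
\[
(S_n)_f=E_{S_n}\,\Lambda_{S_n,f}\,E_{S_n}^T,
\]
which shows that $(S_n)_f$ and $\Lambda_{S_n,f}=\mathrm{diag}(\Psi_{S_n,f}(x_1),\ldots,\Psi_{S_n,f}(x_n))$ are congruent via the invertible matrix $E_{S_n}$. By Sylvester's law of inertia, congruent real symmetric matrices share the same signature, so $(S_n)_f$ is positive definite if and only if $\Lambda_{S_n,f}$ is, i.e.\ if and only if every diagonal entry $\Psi_{S_n,f}(x_i)$ is strictly positive.

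I do not expect any serious obstacle here; the only thing one must be careful about is the triangular shape of $E_{S_n}$, which rests on condition \eqref{eq:condition}. This also explains why Remark \ref{re:order} is needed: without the compatible ordering, $E_{S_n}$ need not be triangular, although it would still be a permutation of a triangular matrix with unit diagonal, so the conclusion would go through after relabelling. The determinant formula \eqref{eq:det1} is the scalar shadow of this congruence and gives a quick sanity check, but it alone only controls the product of the $\Psi_{S_n,f}(x_i)$, not each individual sign, which is why the congruence argument is essential for the necessity direction.
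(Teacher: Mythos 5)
Your proof is correct, but it takes a genuinely different route from the paper's. The paper argues via leading principal minors: it observes that each truncated set $S_i$ is still meet closed (removing maximal elements preserves meet closure), applies the determinant formula \eqref{eq:det1} to get $\det(S_i)_f=\Psi_{S_n,f}(x_1)\cdots\Psi_{S_n,f}(x_i)$, and invokes the criterion that a Hermitian matrix is positive definite if and only if all leading principal minors are positive. You instead take $D_n=S_n$ in the factorization \eqref{eq:fac}, check that $E_{S_n}$ is unit lower triangular (which does rest on \eqref{eq:condition}, as you say) and hence invertible, and conclude by Sylvester's law of inertia that $(S_n)_f$ is congruent to $\Lambda_{S_n,f}$. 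Both arguments are sound; yours is arguably more self-contained (the paper's route silently uses that $\Psi_{S_i,f}$ agrees with $\Psi_{S_n,f}$ on $S_i$, which is true but unremarked) and it delivers more: the congruence gives the full inertia of $(S_n)_f$, i.e.\ the numbers of positive, negative and zero eigenvalues equal the corresponding counts among the $\Psi_{S_n,f}(x_i)$, so positive semidefiniteness and nonsingularity come for free. The paper's route has the advantage of reusing the already-stated determinant formula and of transferring verbatim to the dual Theorem \ref{th:posdefjoincl}, where the nested principal minors are taken from the bottom-right corner; your argument handles that case equally well with an upper triangular incidence matrix. Your closing remarks about Remark \ref{re:order} and about why the determinant formula alone cannot give necessity are both accurate.
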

\begin{proof}
Since removing a maximal element does not affect the meet closeness of the set $S_i$, it follows that all the sets $S_n,S_{n-1},\ldots,S_2,S_1$ are meet closed. In addition, the determinants of the matrices $(S_i)_f$, where $i=1,2,\ldots,n$, are the leading principal minors of the matrix $(S_n)_f$. By \eqref{eq:det1} we have
\begin{align*}
\det(S_1)_f&=\Psi_{S_n,f}(x_1)\\
\det(S_2)_f&=\Psi_{S_n,f}(x_1)\Psi_{S_n,f}(x_2)\\
&\vdots\\
\det(S_{n-1}&)_f=\Psi_{S_n,f}(x_1)\Psi_{S_n,f}(x_2)\cdots\Psi_{S_n,f}(x_{n-1})\\
\det(S_n)_f&=\Psi_{S_n,f}(x_1)\Psi_{S_n,f}(x_1)\cdots\Psi_{S_n,f}(x_{n-1})\Psi_{S_n,f}(x_n).
\end{align*}
Now $(S_n)_f$ is positive definite if and only if $\det(S_i)_f>0$ for all $i=1,2,\ldots,n$ (see \cite[Theorem 7.2.5]{HJ}), and the determinants above are all positive if and only if $\Psi_{S_n,f}(x_i)>0$ for all $i=1,2,\ldots,n$.
\end{proof}

Next we present a dual theorem for join matrices.

\begin{theorem}\label{th:posdefjoincl}
If the set $S_n$ is join closed, then the matrix $[S_n]_f$ is positive definite if and only if $\Phi_{S_n,f}(x_i)>0$ for all $i=1,2,\ldots,n$.
\end{theorem}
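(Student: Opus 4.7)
The plan is to dualize the argument for Theorem \ref{th:posdefmeetcl} in the obvious way, trading the role of the maximal-indexed element $x_n$ for that of the minimal-indexed element $x_1$, and using trailing principal minors in place of leading ones. Specifically, my aim is to exhibit a descending chain of join-closed sets $S_n = T_1 \supset T_2 \supset \cdots \supset T_n$ whose join matrices are the trailing principal submatrices of $[S_n]_f$, then apply the dual determinant formula \eqref{eq:det2} together with Sylvester's criterion on the index-reversed matrix.

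First I would verify that removing $x_1$ from a join-closed $S_n$ preserves join closure: for $x_i, x_j \in S_n \setminus \{x_1\}$ we have $x_i \vee x_j \in S_n$, and by \eqref{eq:condition} its index is at least $\max(i,j) \geq 2$, so the join lies in $S_n \setminus \{x_1\}$. Iterating, each set $T_k := \{x_k, x_{k+1}, \ldots, x_n\}$ is join closed, and $[T_k]_f$ is exactly the trailing $(n-k+1) \times (n-k+1)$ principal submatrix of $[S_n]_f$. Next I would argue that $\Phi_{T_k, f}(x_i) = \Phi_{S_n, f}(x_i)$ whenever $i \geq k$: by \eqref{eq:condition} every $x_v \in S_n$ with $x_v \succ x_i$ has index $v > i \geq k$, hence lies in $T_k$, so the recursion \eqref{eq:Phi1} produces identical values whether computed relative to $T_k$ or to $S_n$. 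Substituting into \eqref{eq:det2} then gives
\[
\det [T_k]_f = \Phi_{S_n,f}(x_k)\,\Phi_{S_n,f}(x_{k+1}) \cdots \Phi_{S_n,f}(x_n).
\]

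To finish, I would note (as in Remark \ref{re:order}) that conjugating $[S_n]_f$ by the order-reversing permutation matrix preserves the spectrum and hence positive definiteness, while converting trailing principal minors into leading ones. Sylvester's criterion therefore certifies that $[S_n]_f$ is positive definite if and only if $\det [T_k]_f > 0$ for every $k = 1, 2, \ldots, n$, and in view of the product formula above this occurs precisely when each factor $\Phi_{S_n,f}(x_i)$ is positive. The only subtlety, and it is mild, is the asymmetry in the usual statement of Sylvester's criterion, which favors leading principal minors; the permutation-similarity remark disposes of it cleanly, so no genuinely new obstacle arises beyond what was handled in the meet case.
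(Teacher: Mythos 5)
Your proposal is correct and follows essentially the same route as the paper: the paper likewise considers the nested join-closed sets $\{x_n\}\subset\{x_{n-1},x_n\}\subset\cdots\subset S_n$, observes that their join matrices form a nested sequence of principal minors of $[S_n]_f$, and applies \eqref{eq:det2} together with the principal-minor criterion from Horn and Johnson. Your explicit verification that $\Phi_{T_k,f}$ agrees with $\Phi_{S_n,f}$ on $T_k$, and the permutation-similarity step reducing trailing minors to leading ones, merely spell out details the paper leaves implicit.
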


\begin{proof}
Let us denote \[S_1'=\{x_n\},\ S_2'=\{x_{n-1},x_n\},\ldots,S_{n-1}'=\{x_2,\ldots,x_{n-1},x_n\}.\] Since the determinants of the matrices \[[S_1']_f,[S_2']_f,\ldots,[S_{n-1}']_f\ \text{and}\ [S_n]_f\]
constitute a nested sequence of $n$ principal minors of $[S_n]_f$, the matrix $[S_n]_f$ is positive definite if and only if all of these matrices have positive determinants (again, see \cite[Theorem 7.2.5]{HJ}). Since all these sets are join closed, the determinants can be calculated by using \eqref{eq:det2}. The rest of the proof is similar to the proof of Theorem \ref{th:posdefmeetcl}.
\end{proof}

\begin{example}\label{ex:chain}
Let $S_n$ be a chain. Thus $x_1\prec x_2\prec\cdots\prec x_{n-1}\prec x_n$. Clearly, the set $S_n$ is both meet and join closed (the matrices $(S_n)_f$ and $[S_n]_f$ may be referred to as the MIN and MAX matrices of the chain $S_n$ respectively). In this case we have $\Psi_{S_n,f}(x_1)=f(x_1)$ and
\[\Psi_{S_n,f}(x_i)=\sum_{x_k\preceq x_i}f(x_k)\mu_{S_n}(x_k,x_i)=f(x_i)-f(x_{i-1})\]
for all $i=2,\ldots,n$. Now it follows from Theorem \ref{th:posdefmeetcl} that the matrix $(S_n)_f$ is positive definite if and only if $f(x_1)>0$ and $f(x_i)>f(x_{i-1})$ for all $i=2,\ldots,n$. In other words, we must have
\[0<f(x_1)<f(x_2)<\cdots<f(x_{n-1})<f(x_n).\]

If we set $(P,\preceq)=(\Zset^+,\leq)$, $f(k)=k$ for all $k\in\Zset$ and $S_n=\{1,2,\ldots,n\}$, we obtain the MIN matrix studied recently by Bhatia \cite{Bh}. Among other things, he presents six distinct proofs for the positive definiteness of this matrix. The one in this example is yet another different proof.

Similarly, by using Theorem \ref{th:posdefjoincl} it is possible to show that the matrix $[S_n]_f$ is positive definite if and only if \[0<f(x_n)<f(x_{n-1})<\cdots<f(x_2)<f(x_1).\]
\end{example}

Next we focus on the case when the set $S_n$ is neither meet nor join closed. It turns out that by using Theorems \ref{th:posdefmeetcl} and \ref{th:posdefjoincl} it is possible to say something about the positive definiteness of the matrices $(S_n)_f$ and $[S_n]_f$ also under these circumstances. Corollary \ref{th:posdefmeet} may be seen as a generalization of Theorem 1 (i) in \cite{BL}.

\begin{corollary}\label{th:posdefmeet}
Let $D_n$ be any finite meet closed subset of $P$ containing all the elements of $S_n$. If $\Psi_{D_n,f}(d_i)>0$ for all $d_i\in D_n$, then the matrix $(S_n)_f$ is positive definite.
\end{corollary}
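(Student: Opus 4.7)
The plan is to exploit the factorization \eqref{eq:fac}, which is available precisely because $D_n$ is meet closed and contains $S_n$, hence automatically contains every meet $x_i\wedge x_j$ with $x_i,x_j\in S_n$. So we may write
\[
(S_n)_f = E_{D_n}\,\Lambda_{D_n,f}\,E_{D_n}^T.
\]
The diagonal matrix $\Lambda_{D_n,f}=\mathrm{diag}(\Psi_{D_n,f}(d_1),\ldots,\Psi_{D_n,f}(d_{m_n}))$ has strictly positive entries by hypothesis, so it is positive definite. Consequently $(S_n)_f$ is automatically positive semidefinite, and to upgrade this to positive definiteness it suffices to show that the $n\times m_n$ incidence matrix $E_{D_n}$ has full row rank $n$.

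To establish the rank, I would exhibit an $n\times n$ invertible submatrix of $E_{D_n}$. Since $S_n\subseteq D_n$, each $x_i$ equals some $d_{k_i}$; form the submatrix $E'$ consisting of the columns $k_1,k_2,\ldots,k_n$ (taken in the order induced by the indexing of $S_n$). By definition of $E_{D_n}$ we have $(E')_{mj}=1$ precisely when $d_{k_j}=x_j\preceq x_m$, and the ordering condition \eqref{eq:condition} then forces $j\leq m$. Thus $E'$ is lower triangular; its diagonal entries are all equal to $1$ since $x_j\preceq x_j$, so $\det E' = 1$ and $E_{D_n}$ has full row rank $n$.

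Combining the two facts: writing $y\in\Rset^n\setminus\{0\}$ and setting $z=E_{D_n}^T y$, the full row rank of $E_{D_n}$ guarantees $z\neq 0$, and then
\[
y^T (S_n)_f\, y \;=\; z^T \Lambda_{D_n,f}\, z \;=\; \sum_{k=1}^{m_n}\Psi_{D_n,f}(d_k)\,z_k^2 \;>\;0,
\]
which proves positive definiteness. The only mildly delicate point is the rank argument; everything else is a direct application of the factorization and the positivity of the $\Psi_{D_n,f}(d_k)$.
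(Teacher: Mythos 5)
Your proof is correct, but it takes a genuinely different route from the paper's. The paper's own argument is a two-line reduction: since $D_n$ is meet closed, Theorem \ref{th:posdefmeetcl} applies to it and shows that $(D_n)_f$ is positive definite, and $(S_n)_f$ is then positive definite because it is a principal submatrix of $(D_n)_f$ (as $S_n\subseteq D_n$). You instead work directly with the factorization \eqref{eq:fac}: the hypothesis makes $\Lambda_{D_n,f}$ positive definite, so $(S_n)_f=E_{D_n}\Lambda_{D_n,f}E_{D_n}^T$ is positive semidefinite, and you upgrade this to definiteness by proving that $E_{D_n}$ has full row rank, exhibiting the $n\times n$ submatrix on the columns indexed by the elements of $S_n$, which is lower triangular with unit diagonal thanks to the ordering convention \eqref{eq:condition}. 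Your rank argument is valid, and you correctly observe that \eqref{eq:fac} is available here because a meet closed $D_n$ containing $S_n$ automatically contains every $x_i\wedge x_j$ (one should also note that the elements of $D_n$ can always be listed in a linear extension of $\preceq$, as the factorization requires). What each approach buys: the paper's proof is shorter because it reuses Theorem \ref{th:posdefmeetcl} together with the standard fact about principal submatrices, while yours is self-contained modulo \eqref{eq:fac}, makes the mechanism explicit (it is exactly the full row rank of the incidence matrix that converts semidefiniteness into definiteness), and in fact proves the stronger statement that $(T)_f$ is positive definite for every subset $T$ of $D_n$ by the same argument.
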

\begin{proof}
By Theorem \ref{th:posdefmeetcl} the matrix $(D_n)_f$ is positive definite. Thus the matrix $(S_n)_f$ is also positive definite since it is a principal submatrix of the matrix $(D_n)_f$.
\end{proof}

\begin{example}\label{ex:powergcd}
Let $(P,\preceq)=(\Zset^+,|)$, $\alpha\in\Rset$ and $f(n)=n^\alpha$ for all $n\in\Zset^+$. Under these assumptions the matrices $(S_n)_f$ and $[S_n]_f$ become the so-called power GCD and LCM matrices, which have been studied extensively by Hong et al. \cite{HE, HL}. It is well known that the matrix $(S_n)_f$ is positive definite if $\alpha>0$ (see \cite[Example 1]{BL2} and \cite[Example 3]{BL}). Here we give another proof for this by using the previous corollary.

Let $$D_n=\downarrow\hspace{-1mm}S_n=\{d\in\Zset^+\ \big|\ d\,|\,x_i\ \text{for some}\ x_i\in S_n\}.$$
Let $\ast$ denote the Dirichlet convolution and $\mu$ denote the number-theoretic Möbius function. Now for every $d_k\in D_n$ we have
\[
\Psi_{D_n,f}(d_k)=\sum_{d_v\,|\,d_k}d_v^\alpha\mu\left(\frac{d_k}{d_v}\right)=(f\ast\mu)(d_k)=J_\alpha(d_k)
=d_k^\alpha\prod_{p\,|\,d_k}\left(1-\frac{1}{p^{\alpha}}\right),
\]
where $J_\alpha$ is a generalization of the Jordan totient function. If $\alpha>0$, then clearly $J_\alpha(d_k)>0$ for all $d_k\in D_n$ and therefore by Corollary \ref{th:posdefmeet} the matrix $(S_n)_f$ is positive definite.
\end{example}

Next we present a similar corollary that concerns the matrix $[S_n]_f$. The proof is essentially the same as the proof of Corollary \ref{th:posdefmeet}.

\begin{corollary}\label{th:posdefjoin}
Let $B_n$ be any finite join closed subset of $P$ containing all the elements of $S_n$. If $\Phi_{B_n,f}(b_i)>0$ for all $b_i\in B_n$, then the matrix $[S_n]_f$ is positive definite.
\end{corollary}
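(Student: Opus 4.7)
The plan is to mimic the proof of Corollary \ref{th:posdefmeet} in its dual form, replacing meets by joins and the relevant theorem on meet matrices by its join counterpart. First, I would apply Theorem \ref{th:posdefjoincl} to the set $B_n$ itself: since $B_n$ is join closed by hypothesis and $\Phi_{B_n,f}(b_i)>0$ for every $b_i\in B_n$, that theorem yields that the full join matrix $[B_n]_f$ is positive definite.

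Next, I would observe that $S_n\subseteq B_n$, so for any $x_i,x_j\in S_n$ the join $x_i\vee x_j$ (taken in $P$) again lies in $B_n$, and the entry $f(x_i\vee x_j)$ of $[S_n]_f$ coincides with the corresponding entry of $[B_n]_f$. Therefore $[S_n]_f$ is obtained from $[B_n]_f$ by selecting the rows and columns indexed by the elements of $S_n\subseteq B_n$; that is, $[S_n]_f$ is a principal submatrix of $[B_n]_f$. Since every principal submatrix of a positive definite matrix is positive definite, the desired conclusion follows.

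A small bookkeeping point is that the enumeration of $B_n$ required for \eqref{eq:Phi1}--\eqref{eq:Phi3} need not restrict to the enumeration of $S_n$ inherited from \eqref{eq:condition}. However, by Remark \ref{re:order} any two orderings compatible with $\preceq$ produce matrices that differ only by conjugation with a permutation matrix, which preserves positive definiteness; hence this causes no real difficulty. I do not anticipate any substantive obstacle, as the argument is entirely parallel to the meet case.
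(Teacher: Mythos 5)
Your proposal is correct and follows exactly the paper's route: apply Theorem \ref{th:posdefjoincl} to the join closed set $B_n$ to get that $[B_n]_f$ is positive definite, then observe that $[S_n]_f$ is a principal submatrix of it. The extra remark about reordering via a permutation conjugation is a harmless (and sensible) bit of bookkeeping that the paper leaves implicit.
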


\begin{example}\label{powerlcm}
Let $(P,\preceq)=(\Zset^+,|)$ as in Example \ref{ex:powergcd}, $\alpha\in\Rset^+$ and $f(n)=\frac{1}{n^\alpha}$ for all $n\in\Zset^+$.
Hong and Enoch Lee \cite[Theorem 2.1]{HE} have shown that the matrix $[S_n]_f$ is positive definite. Here we present a different
proof for this fact by using Corollary \ref{th:posdefjoin}.

Let $\alpha>0$, let $\downarrow\hspace{-1mm}\text{lcm}\,S_n$ denote the set of divisors of $\text{lcm}\,S_n$ and let $\uparrow\hspace{-1mm}S_n$ stand for the set $\{k\in\Zset^+\,\big|\,x_i|k\text{ for some }i=1,\ldots,n\}.$ Now let $$B_n=\uparrow\hspace{-1mm}S_n\cap\downarrow\hspace{-1mm}\text{lcm}\,S_n=\{d\in\Zset^+\ \big|\ x_i\,|\,d\ \text{for some}\ x_i\in S_n\ \text{and}\ d\,|\,\text{lcm}\,S_n\}.$$ Then for every $b_k\in B_n$ we have
\begin{align*}
\Phi_{B_n,f}(b_k)&=\sum_{b_k\,|\,b_v\,|\,\text{lcm}S_n}\frac{1}{b_v^\alpha}\,\mu\left(\frac{b_v}{b_k}\right)\\
&=\left(\frac{1}{\text{lcm}S_n}\right)^\alpha\sum_{b_k\,|\,b_v\,|\,\text{lcm}S_n}\left(\frac{\text{lcm}S_n}{b_v}\right)^\alpha\mu\left(\frac{b_v}{b_k}\right)\\
&=\left(\frac{1}{\text{lcm}S_n}\right)^\alpha\sum_{a\,|\,\frac{\text{lcm}S_n}{b_k}}\left(\frac{(\text{lcm}S_n)/b_k}{a}\right)^\alpha\mu\left(a\right)\\
&=\left(\frac{1}{\text{lcm}S_n}\right)^\alpha J_\alpha\left(\frac{\text{lcm}S_n}{b_k}\right)>0.
\end{align*}
Thus by Corollary \ref{th:posdefjoin} the matrix $[S_n]_f$ is positive definite.
\end{example}

As seen in the above examples, there are two obvious ways to choose the sets $D_n$ and $B_n$. The first is to take $D_n$ (resp. $B_n$) to be the meet (resp. join) subsemilattice of $P$ generated by the set $S_n$. The other is to take $D_n=\downarrow\hspace{-1mm}S_n$ and $B_n=\uparrow\hspace{-1mm}S_n$ in case when the sets $\downarrow\hspace{-1mm}S_n$ and $\uparrow\hspace{-1mm}S_n$ are finite, and otherwise take $D_n=\downarrow\hspace{-1mm}S_n\cap\uparrow\hspace{-1mm}(\wedge S_n)$ and $B_n=\uparrow\hspace{-1mm}S_n\cap\downarrow\hspace{-1mm}(\vee S_n)$ (here $\vee S_n=x_1\vee\cdots\vee x_n$ and $\wedge S_n=x_1\wedge\cdots\wedge x_n$). Benefits of both choices are explained in \cite{MH}.

Although Corollaries \ref{th:posdefmeet} and \ref{th:posdefjoin} can be used in many cases, their conditions are not necessary for the positive definiteness of the matrices $(S_n)_f$ and $[S_n]_f$ and thus they are not always applicable. The following example illustrates this.

\begin{example}
Let $(P,\preceq)=(\Zset^+,|)$, $S_3=\{6,10,15\}$ and
\[\left\{
\begin{array}{cc}
  f(1)=&0\\
  f(2)=&-1\\
  f(3)=&3\\
  f(5)=&-2\\
  f(6)=&5\\
  f(10)=&2\\
  f(15)=&3.
 \end{array}
\right.
\]
Then we obtain the GCD matrix
\[
(S_3)_f=\begin{bmatrix}
5 & -1 & 3\\
-1 & 2 & -2\\
3 & -2 & 3
\end{bmatrix},
\]
which can easily be shown to be positive definite. However, if we choose the elements of $D_3$ as in Figure \ref{fig: kuva3}, direct calculations show that $\Psi_{D_3,f}(d_2)=-1<0$ and $\Psi_{D_3,f}(d_4)=-2<0$. Thus the meet matrix $(S_3)_f$ is positive definite although some of the values of $\Psi_{D_3,f}$ are negative.
\begin{figure}[ht]
\centering
\setlength{\unitlength}{0.8 cm}
\begin{picture}(8,6)
\thicklines
\put(4,1){\line(0,1){2}}
\put(4,1){\line(1,1){2}}
\put(4,1){\line(-1,1){2}}
\put(2,3){\line(1,1){2}}
\put(2,3){\line(0,1){2}}
\put(4,3){\line(1,1){2}}
\put(4,3){\line(-1,1){2}}
\put(6,3){\line(0,1){2}}
\put(6,3){\line(-1,1){2}}

\put(4,0.5){$d_1=1$}
\put(3,3.3){$d_3=3$}
\put(0.7,2.5){$d_2=2$}
\put(6.2,2.75){$d_4=5$}
\put(1,5.3){$d_5=6$}
\put(3,5.3){$d_6=10$}
\put(5,5.3){$d_7=15$}

\put(4,1){\circle*{0.2}}
\put(4,3){\circle*{0.2}}
\put(2,3){\circle*{0.2}}
\put(6,3){\circle*{0.2}}
\put(2,5){\circle*{0.2}}
\put(4,5){\circle*{0.2}}
\put(6,5){\circle*{0.2}}
\end{picture}
\caption{The lattice $(D_3,\preceq)$ and the choices of the elements of $D_3$. }
\label{fig: kuva3}
\end{figure}
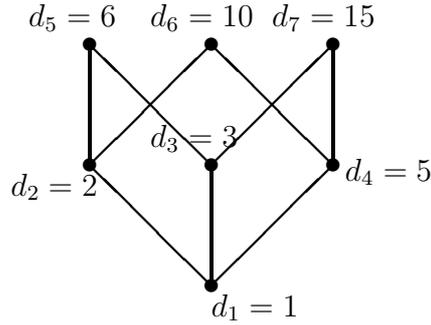
\end{example}

\section{Trees, $A$-sets and positive definiteness}

Next we turn our attention to the special case where the Hasse diagram of the set $\text{meetcl}(S_n)$ is a tree (when it is considered as an undirected graph). Here $\text{meetcl}(S_n)$ (resp. $\text{joincl}(S_n)$) denotes the meet subsemilattice (resp. the join subsemilattice) of $P$ generated by the set $S_n$. Like in Example \ref{ex:chain}, also in this case a certain monotonicity property of $f$ guarantees the positive definiteness of $(S_n)_f$ (resp. $[S_n]_f$). First we present the definitions of these properties.

\begin{definition}
The set $S_n\subseteq P$ is said to be a \emph{$\wedge$-tree set} if the Hasse diagram of $\text{meetcl}(S_n)$ is a tree. Analogously, $S_n$ is a \emph{$\vee$-tree set} if the Hasse diagram of $\text{joincl}(S)$ is a tree.
\end{definition}

There are also a couple of other characterizations for $\wedge$-tree sets and $\vee$-tree sets. We present these only for $\wedge$-tree sets, since the characterizations for $\vee$-tree sets are dual to these.

\begin{lemma}\label{lemma1}
The following statements are equivalent:
\begin{enumerate}
\item $S_n$ is a $\wedge$-tree set.
\item Every element in $\mathrm{meetcl}(S_n)$ covers at most one element of $\mathrm{meetcl}(S_n)$.
\item For every $x\in\mathrm{meetcl}(S_n)$ the set
\[
(\downarrow\hspace{-1mm}x)\cap\mathrm{meetcl}(S_n)=\{y\in\mathrm{meetcl}(S_n)\ \big|\ y\preceq x\}
\]
is a chain.
\item For all $x,y,z\in\mathrm{meetcl}(S_n)$ we have
\[
(x\preceq z\ \mathrm{and}\ y\preceq z)\Rightarrow (x\preceq y\ \mathrm{or}\ y\preceq x).
\]
\end{enumerate}
\end{lemma}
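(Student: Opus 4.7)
The plan is to establish the equivalence via the cyclic chain of implications $(1)\Rightarrow(2)\Rightarrow(3)\Rightarrow(4)\Rightarrow(1)$. The underlying structural fact I will use throughout is that $\mathrm{meetcl}(S_n)$ is a finite meet-subsemilattice of $P$: it consists of the meets $\bigwedge T$ over non-empty $T\subseteq S_n$ and so is a finite lattice with minimum $\bigwedge S_n$, and any two comparable elements of $\mathrm{meetcl}(S_n)$ are joined by a saturated chain inside the subposet.

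For $(1)\Rightarrow(2)$ I argue by contrapositive. If some $z\in\mathrm{meetcl}(S_n)$ covers two distinct elements $x$ and $y$, then $m:=x\wedge y$ lies in $\mathrm{meetcl}(S_n)$ and strictly below both. I choose saturated chains from $m$ up to $x$ and from $m$ up to $y$ inside $\mathrm{meetcl}(S_n)$. These chains share only the endpoint $m$, since any other common vertex $w$ would satisfy $w\preceq x$ and $w\preceq y$, hence $w\preceq m$, contradicting $w\succ m$. Concatenating the two chains with the cover edges from $x$ to $z$ and from $y$ to $z$ produces a simple cycle in the Hasse diagram of $\mathrm{meetcl}(S_n)$, contradicting (1). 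This is the step I expect to require the most care, since the internal disjointness of the two upward chains is the precise point where the meet-closure hypothesis is genuinely exploited; the rest of the argument only uses posets and cover relations.

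For $(2)\Rightarrow(3)$ I would induct on the cardinality of $\downarrow x\cap\mathrm{meetcl}(S_n)$. If $x$ is minimal in $\mathrm{meetcl}(S_n)$ the down-set is $\{x\}$, a chain. Otherwise, by (2), $x$ covers a unique element $x'\in\mathrm{meetcl}(S_n)$; for any $y\in\mathrm{meetcl}(S_n)$ with $y\prec x$, the penultimate vertex of a saturated chain from $y$ to $x$ inside $\mathrm{meetcl}(S_n)$ must equal $x'$, forcing $y\preceq x'$. Hence $\downarrow x\cap\mathrm{meetcl}(S_n)=(\downarrow x'\cap\mathrm{meetcl}(S_n))\cup\{x\}$, which is a chain by the inductive hypothesis applied to $x'$. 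The implication $(3)\Rightarrow(4)$ is then immediate, since $x,y\preceq z$ places both inside the chain $\downarrow z\cap\mathrm{meetcl}(S_n)$, rendering them comparable.

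Finally, for $(4)\Rightarrow(1)$ connectedness of the Hasse diagram is automatic, as every vertex is joined to $\bigwedge S_n$ by a saturated chain. To establish acyclicity I would suppose a cycle $v_0,v_1,\ldots,v_r=v_0$ of length $r\geq 3$ exists in the Hasse diagram and select a $\preceq$-maximal vertex $v_i$ among $v_0,\ldots,v_{r-1}$. Its two cycle neighbours $v_{i-1}$ and $v_{i+1}$ are distinct (since $r\geq 3$) and are both covered by $v_i$, so by (4) they are comparable, say $v_{i-1}\preceq v_{i+1}$. But then $v_{i-1}\preceq v_{i+1}\prec v_i$ together with the fact that $v_i$ covers $v_{i-1}$ in $\mathrm{meetcl}(S_n)$ forces $v_{i-1}=v_{i+1}$, a contradiction.
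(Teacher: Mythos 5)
Your proof is correct and complete. The paper itself offers no argument for this lemma beyond the remark that it is ``simple and straightforward,'' so there is nothing to compare against; your cyclic chain $(1)\Rightarrow(2)\Rightarrow(3)\Rightarrow(4)\Rightarrow(1)$ supplies exactly the missing details, and each link checks out. The step you flag as delicate, $(1)\Rightarrow(2)$, is indeed the only place where meet-closure is used, and your disjointness argument for the two saturated chains above $m=x\wedge y$ is the right mechanism: a common interior vertex $w$ would be a lower bound of $x$ and $y$ strictly above their meet. Two cosmetic points, neither a gap: first, $\mathrm{meetcl}(S_n)$ is a finite meet-semilattice with minimum $\bigwedge S_n$ but need not be a lattice (two elements with no common upper bound in it have no join); you never use joins, so nothing breaks, but the word ``lattice'' should go. Second, in $(1)\Rightarrow(2)$ you assert that $m$ lies strictly below both $x$ and $y$; this needs the one-line observation that two distinct elements both covered by $z$ are incomparable (otherwise $x\prec y\prec z$ would contradict $z$ covering $x$), which is what guarantees $m\neq x$ and $m\neq y$ and hence that both chains are nontrivial and the resulting cycle has length at least $4$.
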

\begin{proof}
The proof is simple and straightforward.
\end{proof}

Next we define the monotonicity property for $f$ that we mentioned earlier.

\begin{definition}\label{def:order-preserving}
The function $f:P\to\Rset$ is \emph{strictly order-preserving} if
\begin{equation}\label{eq:order-preserving}
x\prec y\Rightarrow f(x)<f(y).
\end{equation}
Analogously, $f$ is \emph{strictly order-reversing} if
\begin{equation}\label{eq:order-reversing}
x\prec y\Rightarrow f(y)<f(x).
\end{equation}
The function $f$ is said to be \emph{order-preserving} (resp. \emph{order-reversing}) if equality is allowed on the right side of \eqref{eq:order-preserving} (resp. \eqref{eq:order-reversing}).
\end{definition}

\begin{remark}\label{re:chain}
After adopting the terminology in Definition \ref{def:order-preserving} we are able to revisit Example \ref{ex:chain} and express its results in the following form: If the set $S_n$ is a chain, then
\begin{enumerate}
\item $(S_n)_f$ is positive definite\\ $\Leftrightarrow$ $f$ is strictly order-preserving in $S_n$ with positive values,
\item $[S_n]_f$ is positive definite\\ $\Leftrightarrow$ $f$ is strictly order-reversing in $S_n$ with positive values.
\end{enumerate}
\end{remark}

The following theorem presents a condition for positive definiteness of $(S_n)_f$ (resp. $[S_n]_f$) in the case when the values of $f$ are positive and $f$ is order-preserving (resp. order-reversing).

\begin{theorem}\label{th:posdeftree}
Let $f(x)>0$ for all $x\in P$. Then the following statements hold:
\begin{enumerate}
\item If $S_n$ is a $\wedge$-tree set and $f$ is strictly order-preserving, then $(S_n)_f$ is positive definite.
\item If $S_n$ is a $\vee$-tree set and $f$ is strictly order-reversing, then $[S_n]_f$ is positive definite.
\end{enumerate}
\end{theorem}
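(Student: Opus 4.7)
The plan is to deduce both parts from Corollary \ref{th:posdefmeet} (for part (1)) and Corollary \ref{th:posdefjoin} (for part (2)), applied with $D_n = \text{meetcl}(S_n)$ and $B_n = \text{joincl}(S_n)$ respectively. I present the argument for (1); part (2) then follows by the obvious dualization (meets $\leftrightarrow$ joins, $\downarrow$ $\leftrightarrow$ $\uparrow$, strictly order-preserving $\leftrightarrow$ strictly order-reversing). With this choice of $D_n$, it suffices to verify that $\Psi_{D_n,f}(d) > 0$ for every $d \in D_n$.

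The key structural input is part (3) of Lemma \ref{lemma1}: for every $d \in D_n$, the set $(\downarrow\hspace{-1mm}d) \cap D_n$ of elements of $D_n$ below $d$ is a chain, which I write as $d_0 \prec d_1 \prec \cdots \prec d_{k-1} \prec d_k = d$ (with the chain below $d$ being empty if $d$ is minimal in $D_n$). Since the same chain property applies to every $d_i$, the $D_n$-downset of each $d_i$ is exactly $\{d_0,\ldots,d_{i-1}\}$, and the recursion \eqref{eq:Psi1} collapses to the same telescoping already computed in Example \ref{ex:chain}: $\Psi_{D_n,f}(d_0) = f(d_0)$ and $\Psi_{D_n,f}(d_i) = f(d_i) - f(d_{i-1})$ for $1 \le i \le k-1$. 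Summing gives $\sum_{i=0}^{k-1} \Psi_{D_n,f}(d_i) = f(d_{k-1})$, so
\[
\Psi_{D_n,f}(d) = f(d) - f(d_{k-1})
\]
when $d$ has a strict predecessor in $D_n$, and $\Psi_{D_n,f}(d) = f(d)$ otherwise.

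Positivity is then immediate: in the first case $d_{k-1} \prec d$ together with strict order-preservation of $f$ yields $f(d_{k-1}) < f(d)$, while in the second case positivity of $f$ gives $\Psi_{D_n,f}(d) = f(d) > 0$. The only subtle point is confirming that the telescoping identity really is computed relative to the full $D_n$ and not merely relative to the chain $d_0 \prec \cdots \prec d_{k-1}$; this is precisely what Lemma \ref{lemma1}(3) guarantees, since each $d_i$'s entire $D_n$-downset already sits inside that chain. I expect this verification that the $\Psi_{D_n,f}$-recursion localises to the chain below $d$ to be the only genuine obstacle in the proof; once it is in place, both parts of the theorem drop out of Corollaries \ref{th:posdefmeet} and \ref{th:posdefjoin}.
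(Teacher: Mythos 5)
Your proposal is correct and follows essentially the same route as the paper: both reduce the theorem to Corollary \ref{th:posdefmeet} with $D_n=\mathrm{meetcl}(S_n)$ and use the tree/chain structure from Lemma \ref{lemma1} to show $\Psi_{D_n,f}(d)=f(d)-f(d')>0$, where $d'$ is the unique element of $D_n$ that $d$ covers (the paper obtains this by subtracting two instances of \eqref{eq:Psi2}, you by telescoping along the chain below $d$ --- the same identity). The ``subtle point'' you flag, that the $\Psi$-recursion localises to the chain below $d$, is exactly the fact the paper uses implicitly, and your verification of it is sound.
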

\begin{proof}
We prove only the first part since the proof of the second part is dual to it. Let $D_n=\text{meetcl}(S_n)$. We apply Corollary \ref{th:posdefmeet} and show that $\Psi_{D_n,f}(d_k)>0$ for all $d_k\in D_n$. If $k=1$, then $d_k=\min D_n$ and we have $\Psi_{D_n,f}(d_k)=f(d_k)>0$ by assumption. Now let $k>1$. By Lemma \ref{lemma1} $d_k$ covers exactly one element $d_w$ in $\text{meetcl}(S_n)$ and by the order-preserving property we have $f(d_w)<f(d_k)$. Formula \eqref{eq:Psi2} yields
\[
f(d_w)=\sum_{d_v\preceq d_w}\Psi_{D_n,f}(d_v)\ \text{and}\ f(d_k)=\sum_{d_v\preceq d_k}\Psi_{D_n,f}(d_v),
\]
and by subtracting we obtain
\[
0<f(d_k)-f(d_w)=\sum_{d_v\preceq d_k}\Psi_{D_n,f}(d_v)-\sum_{d_v\preceq d_w}\Psi_{D_n,f}(d_v)=\Psi_{D_n,f}(d_k),
\]
which completes the proof.
\end{proof}

As seen in Remark \ref{re:chain}, sometimes it is not only sufficient but also necessary for the positive definiteness of the matrix $(S_n)_f$ that the function $f$ is strictly order-preserving. The next theorem is an example of this. A similar statement can be made regarding join matrices.

\begin{theorem}
If $x_1=\min S_n$ and the Hasse diagram of the set $S_n$ is a tree and the matrix $(S_n)_f$ is positive definite, then the function $f$ is strictly order-preserving in $S_n$ and $f(x_i)>0$ for all $x_i\in S_n$.
\end{theorem}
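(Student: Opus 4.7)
The plan is to extract everything from positive definiteness by looking only at $1\times 1$ and $2\times 2$ principal submatrices of $(S_n)_f$, which must themselves be positive definite. The hypotheses about the tree structure and the minimum element will not actually be needed for the argument; they are present because the theorem is framed as a partial converse to Theorem \ref{th:posdeftree}.

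First I would note that the $i$th diagonal entry of $(S_n)_f$ is $f(x_i\wedge x_i)=f(x_i)$, so positive definiteness forces $f(x_i)>0$ for every $i=1,\ldots,n$. This already handles the second half of the conclusion without touching the poset structure.

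Next, fix any two elements $x_i,x_j\in S_n$ with $x_i\prec x_j$. Then $x_i\wedge x_j=x_i$, so the $2\times 2$ principal submatrix of $(S_n)_f$ indexed by $i,j$ is
\[
\begin{pmatrix} f(x_i) & f(x_i) \\ f(x_i) & f(x_j)\end{pmatrix}.
\]
Being a principal submatrix of a positive definite matrix, it is positive definite, hence its determinant $f(x_i)\bigl(f(x_j)-f(x_i)\bigr)$ is strictly positive. Combined with $f(x_i)>0$ from the previous step, this forces $f(x_j)>f(x_i)$, showing that $f$ is strictly order-preserving on $S_n$.

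There is no real obstacle in this argument; everything reduces to the elementary fact that the principal minors of a positive definite matrix of orders $1$ and $2$ are positive. The only delicate point is conceptual, namely recognising that the tree hypothesis and the existence of $\min S_n$ are not needed to reach the conclusion, so the theorem is in fact valid under the weaker assumption that $(S_n)_f$ is merely positive definite.
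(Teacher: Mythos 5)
Your proof is correct, and it takes a genuinely different and more elementary route than the paper's. The paper proves the result by invoking its Theorem \ref{th:posdefmeetcl} (positive definiteness of a meet-closed set's meet matrix is equivalent to positivity of all $\Psi_{S_n,f}(x_i)$), then uses the tree hypothesis via Lemma \ref{lemma1}: it first shows $f(x_i)<f(x_j)$ whenever $x_j$ covers $x_i$ by subtracting the two sum formulas \eqref{eq:Psi2} to get $f(x_j)-f(x_i)=\Psi_{S_n,f}(x_j)>0$, and then extends to arbitrary comparable pairs by noting that $S_n\cap[x_i,x_j]$ is a chain of consecutive covers. Your argument bypasses all of this machinery: the diagonal entries give $f(x_i)>0$, and for $x_i\prec x_j$ the $2\times 2$ principal minor
\[
\det\begin{pmatrix} f(x_i) & f(x_i) \\ f(x_i) & f(x_j)\end{pmatrix}=f(x_i)\bigl(f(x_j)-f(x_i)\bigr)>0
\]
immediately forces $f(x_i)<f(x_j)$, since $x_i\wedge x_j=x_i$ for comparable elements regardless of any global structure. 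You are right that this shows the hypotheses $x_1=\min S_n$ and ``Hasse diagram is a tree'' are not needed for the stated conclusion; the paper includes them because the theorem is meant as a converse to Theorem \ref{th:posdeftree} within the tree framework (and because its chosen proof technique, resting on Theorem \ref{th:posdefmeetcl} and the cover structure, genuinely uses them). What your approach buys is generality and brevity; what the paper's approach buys is a uniform narrative in terms of the function $\Psi_{S_n,f}$, which also yields the slightly finer information that each cover step contributes exactly $\Psi_{S_n,f}(x_j)$ to the increase of $f$.
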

\begin{proof}
In this case the set $S_n$ is clearly both meet closed and $\wedge$-tree set. We begin the proof by showing that if $x_j$ covers $x_i$, then $f(x_i)<f(x_j)$. By Theorem \ref{th:posdefmeetcl} $\Psi_{S_n,f}(x_j)>0$, and from Equation \eqref{eq:Psi2} we obtain
\[
f(x_j)=\sum_{x_k\preceq x_j}\Psi_{S_n,f}(x_k)\quad\text{and}\quad f(x_i)=\sum_{x_k\preceq x_i}\Psi_{S_n,f}(x_k).
\]
Subtracting the second from the first yields
\[
f(x_j)-f(x_i)=\sum_{x_k\preceq x_j}\Psi_{S_n,f}(x_k)-\sum_{x_k\preceq x_i}\Psi_{S_n,f}(x_k)=\Psi_{S_n,f}(x_j)>0,
\]
from which we obtain $f(x_i)<f(x_j)$. Then suppose that $x_i\prec x_j$ but $x_j$ does not cover $x_i$ for some $x_i,x_j\in S_n$. Since $(P,\preceq)$ and in particular $(S_n,\preceq)$ is locally finite, there is only a finite number of elements of $S_n$ in the interval $[x_i,x_j]$. In fact, by item 3 in Lemma \ref{lemma1}, the elements of the set $S_n\cap[x_i,x_j]$ are always comparable, and therefore the elements of this set form a chain
\[
x_i\prec x_{k_1}\prec x_{k_2}\prec\cdots\prec x_{k_r}\prec x_j
\]
in which the previous element is always covered by the next. This implies that
\[
f(x_i)<f(x_{k_1})<f(x_{k_2})<\cdots<f(x_{k_r})<f(x_j),
\]
and thus we have proven the order-preservation of $f$ in $S_n$ in general.
The second claim now follows easily. By Theorem \ref{th:posdefmeetcl} $f(x_1)=\Psi_{S_n,f}(x_1)>0$. Further, since $x_1\preceq x_i$ for all $x_i\in S_n$ and $f$ is strictly order-preserving, $f(x_i)>0$ for all $x_i\in S_n$.
\end{proof}

In \cite{K2} Korkee studies the meet and join matrices of an $A$-set, which he defines as follows.

\begin{definition}
The set $S_n$ is an $A$-set if the set $A=\{x_i\wedge x_j\ |\ i\neq j\}$ is a chain.
\end{definition}

Korkee derives formulas for the structure, determinant and inverse of the matrix $(S_n)_f$ in a case when $S_n$ in an $A$-set. He also does the same for the matrix $[S_n]_f$ in a case when the dual of $S_n$ is an $A$-set. He does not, however, consider the positive definiteness of these matrices.

It turns out that Theorem \ref{th:posdeftree} can be applied directly to show the positive definiteness of the matrix $(S_n)_f$ when the set $S_n$ is an $A$-set and the positive definiteness of the matrix $[S_n]_f$ when the dual of the set $S_n$ is an $A$-set. This follows from the next theorem.

\begin{theorem}\label{lemma2}
Every $A$-set $S_n$ is also a $\wedge$-tree set and every dual of an $A$-set is a $\vee$-tree set.
\end{theorem}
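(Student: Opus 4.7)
The strategy is to apply characterization (3) of Lemma \ref{lemma1} after first establishing that $\mathrm{meetcl}(S_n) = S_n \cup A$. For this identification, the inclusion $S_n \cup A \subseteq \mathrm{meetcl}(S_n)$ is clear, so the task reduces to showing that any element of $\mathrm{meetcl}(S_n)$ obtained as a meet $\bigwedge T$ of a finite $T \subseteq S_n$ with $|T| \geq 2$ actually lies in $A$. I would verify the identity
\[
\bigwedge T \;=\; \bigwedge\{x_s \wedge x_t : x_s, x_t \in T,\ s \neq t\}
\]
by a routine two-sided comparison, and then observe that the right-hand side is a meet of finitely many elements of the chain $A$; such a meet in $P$ coincides with the minimum of these elements, which is itself a pairwise meet and hence an element of $A$.

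With this description of $\mathrm{meetcl}(S_n)$ in hand, I would fix an arbitrary $z \in S_n \cup A$ and check that $(\downarrow z) \cap (S_n \cup A)$ is a chain. The key observation is that if $x_i \in S_n$ satisfies $x_i \preceq x_\ell$ for some $\ell \neq i$, then $x_i = x_i \wedge x_\ell \in A$. Applying this to an arbitrary $u = x_i$ in $(\downarrow z) \cap S_n$ with $u \neq z$, and splitting into the cases $z = x_j \in S_n$ and $z = x_j \wedge x_k \in A$ (with $j \neq k$), one finds in each case an index $\ell \neq i$ with $x_i \preceq x_\ell$, forcing $x_i \in A$. Hence $(\downarrow z) \cap (S_n \cup A) \subseteq (A \cap \downarrow z) \cup \{z\}$, which is a chain (a subset of the chain $A$ together with the upper bound $z$). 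By Lemma \ref{lemma1}(3), $S_n$ is a $\wedge$-tree set.

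The $\vee$-tree claim follows by reading the entire preceding argument in the dual poset $(P, \succeq)$: the hypothesis that $S_n$ is the dual of an $A$-set is exactly the assertion that $\{x_i \vee x_j : i \neq j\}$ forms a chain in $(P, \preceq)$, while $\mathrm{joincl}$ computed in $(P, \preceq)$ coincides with $\mathrm{meetcl}$ computed in $(P, \succeq)$, and a $\vee$-tree set in $(P, \preceq)$ is a $\wedge$-tree set in the dual. The main obstacle is the preliminary identification $\mathrm{meetcl}(S_n) = S_n \cup A$; once one has recognised that a meet of finitely many comparable elements is simply their minimum, the subsequent case analysis is routine and rests only on the elementary fact that a non-maximal element of $S_n$ automatically belongs to $A$.
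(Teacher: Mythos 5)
Your proof is correct. The first half coincides with the paper's: both arguments begin by establishing $\mathrm{meetcl}(S_n)=S_n\cup A$, the paper by checking closure under binary meets via the identity $x\wedge(u\wedge v)=(x\wedge u)\wedge(x\wedge v)$ together with the fact that a meet of two comparable elements of the chain $A$ stays in $A$, and you by reducing an arbitrary finite meet $\bigwedge T$ to the minimum of a finite subset of the chain $A$; these are essentially the same observation. The second half genuinely diverges. The paper verifies characterization (2) of Lemma \ref{lemma1} (each element covers at most one element) by contradiction: assuming $x$ covers two incomparable elements $y,z$, it locates $z\in S_n\setminus A$, forces $x=u\wedge v\in A$, and derives $z=v\wedge z\in A$, a contradiction. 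You instead verify characterization (3) directly, resting on the single clean observation that any $x_i\in S_n$ lying below some $x_\ell$ with $\ell\neq i$ satisfies $x_i=x_i\wedge x_\ell\in A$, whence $(\downarrow\hspace{-1mm}z)\cap(S_n\cup A)\subseteq(A\cap\downarrow\hspace{-1mm}z)\cup\{z\}$ is a chain. Your route avoids the contradiction and the figure-based configuration analysis, and isolates a reusable fact (non-maximal elements of an $A$-set lie in $A$); the paper's route is shorter to state because covering relations involve only three elements at a time. Both treat the dual statement identically, by passing to the opposite order. One small point worth making explicit in a final write-up of your version: when $z=x_j\wedge x_k\in A$ with $j\neq k$, at least one of $j,k$ differs from $i$, which is all that is needed to produce the index $\ell$.
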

\begin{proof}
Again we prove only the first part of the claim, since the second part follows from it trivially. Assume that $S_n$ is an $A$-set. First we need to show that $\text{meetcl}(S_n)=S_n\cup A$, where $A$ is the set defined above. In order to do this, we only need to check that $S_n\cup A$ is meet closed. Let $x,y\in S_n\cup A$. We may assume that $x\in S_n$ and $y\in A$, since the other cases are trivial. Now $y=u\wedge v$ for some $u,v\in S_n$, and we obtain
\[
x\wedge y=(x\wedge x)\wedge(u\wedge v)=\underbrace{(x\wedge u)}_{\in A}\wedge \underbrace{(x\wedge v)}_{\in A}\in A,
\]
since $A$ is a chain. Thus the first part of the proof is complete.

Next we prove that every element $x\in\text{meetcl}(S_n)$ covers at most one element of $\text{meetcl}(S_n)$. We now suppose for a contradiction that $x$ covers both $y$ and $z$ for some $x,y,z\in\text{meetcl}(S_n)$. Since $y$ and $z$ are incomparable, we must have $y\not\in A$ or $z\not\in A$ (since $A$ is a chain). We may assume that $z\not\in A$, from which it follows that $z\in S_n$. Now we must have $x\not\in S_n$, since otherwise we would have $x\wedge z=z\not\in A$. Thus $x\in A$ and there exist elements $u,v\in S_n$ such that $x=u\wedge v$. Now, as we can see from Figure \ref{fig: kuva4}, we have $v\wedge z=z\not\in A$, which is a contradiction. The claim now follows from Lemma \ref{lemma1}.
\end{proof}

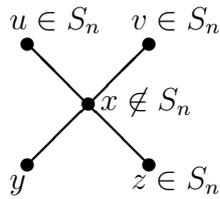
\begin{figure}[ht!]
\centering
\setlength{\unitlength}{0.8 cm}
\begin{picture}(4,4)
\thicklines
\put(1,1){\line(1,1){2}}
\put(3,1){\line(-1,1){2}}

\put(0.7,0.6){$y$}
\put(0.7,3.25){$u\in S_n$}
\put(2.7,0.6){$z\in S_n$}
\put(2.7,3.25){$v\in S_n$}
\put(2.2,1.9){$x\not\in S_n$}

\put(1,1){\circle*{0.2}}
\put(1,3){\circle*{0.2}}
\put(3,1){\circle*{0.2}}
\put(2,2){\circle*{0.2}}
\put(3,3){\circle*{0.2}}
\end{picture}
\caption{Illustration of the proof of Lemma \ref{lemma2}.}
\label{fig: kuva4}
\end{figure}

It is easy to see that the converse of Theorem \ref{lemma2} is not true. Figure \ref{fig: kuva1} exemplifies this. It also illustrates the structure of a typical $A$-set. The Hasse diagram of an $A$-set is always a tree whether the set $S_n$ is finite or not.

\begin{figure}[ht]
\centering
\setlength{\unitlength}{0.8cm}
\begin{picture}(15,9)
\thicklines
\put(5,1){\line(-1,1){4}}
\put(5,1){\line(1,1){3}}
\put(7,3){\line(-1,1){1}}
\put(3,3){\line(1,1){1}}
\put(2,4){\line(0,1){1}}
\put(2,4){\line(1,1){1}}

\put(2.8,5.25){$x_3$}
\put(0.8,5.25){$x_1$}
\put(1.8,5.25){$x_2$}
\put(7.8,4.25){$x_6$}
\put(5.8,4.25){$x_5$}
\put(3.8,4.25){$x_4$}

\put(1,5){\circle*{0.2}}
\put(2,5){\circle*{0.2}}
\put(3,5){\circle*{0.2}}
\put(2,4){\circle*{0.2}}
\put(3,3){\circle*{0.2}}
\put(4,4){\circle*{0.2}}
\put(5,1){\circle*{0.2}}
\put(6,4){\circle*{0.2}}
\put(7,3){\circle*{0.2}}
\put(8,4){\circle*{0.2}}

\put(12,1){\line(0,1){6}}
\put(12,1){\line(1,1){1}}
\put(12,1){\line(2,1){2}}
\put(12,1){\line(-1,1){1}}
\put(12,1){\line(-2,1){2}}
\put(12,3){\line(1,1){1}}
\put(12,5){\line(1,1){1}}
\put(12,5){\line(-1,1){1}}
\put(12,5){\line(-2,1){2}}
\put(12,7){\line(1,1){1}}
\put(12,7){\line(-1,1){1}}

\put(9.75,2.25){$x_1$}
\put(10.75,2.25){$x_2$}
\put(12.75,2.25){$x_3$}
\put(13.75,2.25){$x_4$}
\put(12.75,4.25){$x_5$}
\put(12.1,4.75){$x_6$}
\put(12.75,6.25){$x_9$}
\put(9.75,6.25){$x_7$}
\put(10.75,6.25){$x_8$}
\put(12.75,8.25){$x_{10}$}
\put(10.75,8.25){$x_{11}$}

\put(12,1){\circle*{0.2}}
\put(10,2){\circle*{0.2}}
\put(11,2){\circle*{0.2}}
\put(13,2){\circle*{0.2}}
\put(14,2){\circle*{0.2}}
\put(13,4){\circle*{0.2}}
\put(10,6){\circle*{0.2}}
\put(11,6){\circle*{0.2}}
\put(13,6){\circle*{0.2}}
\put(13,8){\circle*{0.2}}
\put(11,8){\circle*{0.2}}
\put(12,3){\circle*{0.2}}
\put(12,5){\circle*{0.2}}
\put(12,7){\circle*{0.2}}
\end{picture}
\caption{The Hasse diagram on the left is an example of a set $S_6$ that is a $\wedge$-tree set but not an $A$-set. The semilattice on the right is an example of a nontrivial finite $A$-set $S_{11}$.}
\label{fig: kuva1}
\end{figure}
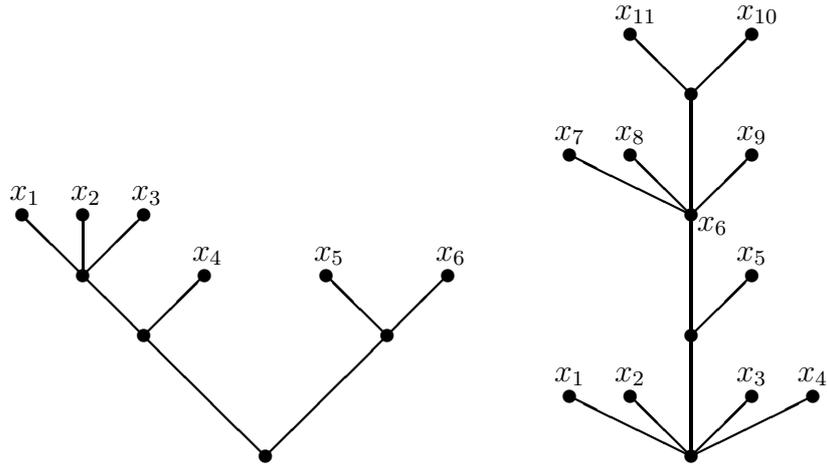

\section{Eigenvalue estimations}

In this section we present bounds for the eigenvalues of certain meet and join matrices. In order to do this, we first need to present the following two lemmas. We here assume that $f$ is strictly order-preserving or order-reversing and also that $f$ is either \emph{increasing} or \emph{decreasing in the set} $S_n$ with respect to the indices $i$ of the elements $x_i$, i.e. $i\leq j\Rightarrow f(x_i)\leq f(x_j)$ or $i\leq j\Rightarrow f(x_i)\geq f(x_j)$. It should be noted that if $f:P\to\Rset$ is either order-preserving or order-reversing, then it is always possible to rearrange the elements of the set $S_n$ so that $f$ becomes increasing or decreasing with respect to the indices. And as stated in Remark \ref{re:order}, this does not affect on the eigenvalues. For example, if $f$ is order-preserving, we may list the images of the elements of the set $S_n$ in ascending order as
\[
f(x_{j_1})\leq f(x_{j_2})\leq\cdots\leq f(x_{j_n}),
\]
and then define $x_i'=x_{j_i}$ for all $i=1,2,\ldots,n$. This even satisfies \eqref{eq:condition}, since by order-preserving property we have
\[
x_i'\preceq x_j'\Rightarrow f(x_i')\leq f(x_j')\Rightarrow i\leq j.
\]
Therefore if the function $f$ is order-preserving, assuming $i\leq j\Rightarrow f(x_i)\leq f(x_j)$ causes no additional restrictions in the study of eigenvalues of meet matrices.

Lemma \ref{upper bound2} and Theorem \ref{Hongtheorem} are generalizations of Hong's and Enoch Lee's results (see \cite[Theorem 2.3]{HE}).

\begin{lemma}\label{upper bound}
Let $f:P\to\Rset$ be a function with nonnegative values, and let $W_k$ denote the $k$-dimensional subspace of the complex vector space $\Cset^n$ consisting of vectors that have zero entries in the coordinates at $k+1,k+2,\ldots,n$ (i.e. $W_k=\mathrm{span}\{\mx{e}_{1},\mx{e}_{2},\ldots,\mx{e}_{k}\}$). Let $\mx{y}=[y_1,\ldots,y_n]^T$ be any vector in $W_k$ (that is, $y_{k+1}=\cdots=y_{n}=0$). If $f$ is order-preserving in $\mathrm{meetcl}(S_n)$, then we have
\begin{equation}\label{eq:lemma1}
\mx{y}^*(S_n)_f\mx{y}\leq k\mx{y}^*\mx{y}f(x_{k}), 
\end{equation}
where $\mx{y}^*$ is the complex conjugate transpose of $\mx{y}$. 
\end{lemma}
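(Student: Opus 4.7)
The plan is to reduce the inequality to a statement about the top-left $k\times k$ principal submatrix of $(S_n)_f$ and then estimate its quadratic form in an elementary way.

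First I would observe that because $y_{k+1}=\cdots=y_n=0$, the expression $\mx{y}^*(S_n)_f\mx{y}$ only sees entries $a_{ij}:=f(x_i\wedge x_j)$ with $1\leq i,j\leq k$. The key observation is the uniform bound $0\leq a_{ij}\leq f(x_k)$ for these indices. Nonnegativity is given. For the upper bound, $x_i\wedge x_j\preceq x_i$ together with the order-preserving hypothesis on $\text{meetcl}(S_n)$ gives $f(x_i\wedge x_j)\leq f(x_i)$; combined with the standing section assumption that $f$ is increasing with respect to the indices of $S_n$, this yields $f(x_i)\leq f(x_k)$ for $i\leq k$.

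With this bound in hand I would estimate the Hermitian form directly. Since $(S_n)_f$ is real symmetric, $\mx{y}^*(S_n)_f\mx{y}$ is real, and all $a_{ij}$ are nonnegative, so
\[
\mx{y}^*(S_n)_f\mx{y}=\sum_{i,j=1}^k \mathrm{Re}(\overline{y_i}y_j)\,a_{ij}\leq\sum_{i,j=1}^k|y_i|\,|y_j|\,a_{ij}\leq\tfrac12\sum_{i,j=1}^k\bigl(|y_i|^2+|y_j|^2\bigr)a_{ij},
\]
where the last step is AM--GM. The symmetry $a_{ij}=a_{ji}$ collapses the last expression to $\sum_{i=1}^k|y_i|^2\sum_{j=1}^k a_{ij}$. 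Each inner sum has $k$ terms each bounded by $f(x_k)$, producing the desired bound $k\,\mx{y}^*\mx{y}\,f(x_k)$.

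The only delicate point, rather than a real obstacle, is being careful with the complex conjugate transpose when passing to absolute values; everything else is a direct application of the order-preserving hypothesis combined with the standing assumption that the elements of $S_n$ are indexed so that $f(x_1)\leq\cdots\leq f(x_n)$. An alternative, equally quick route would be to observe that the $k\times k$ principal submatrix $M_k=(a_{ij})$ is real symmetric, so $\mx{y}^*M_k\mx{y}\leq\|M_k\|_2\|\mx{y}\|^2$, and then bound $\|M_k\|_2\leq\|M_k\|_F\leq\sqrt{k^2 f(x_k)^2}=k f(x_k)$ using the same entrywise estimate.
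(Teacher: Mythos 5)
Your proof is correct, and it takes a genuinely different route from the paper's. The paper proves the lemma by induction on $k$: it peels off the last coordinate, splits $\mx{y}^*(S_n)_f\mx{y}$ into the contribution of the $(k+1)$st row and column plus the quadratic form of a truncated vector, and controls the new terms with the triangle inequality and $|ab|\leq\tfrac12(|a|^2+|b|^2)$ before invoking the induction hypothesis (this mirrors Hong and Enoch Lee's original argument for reciprocal LCM matrices). You instead observe directly that every entry $a_{ij}=f(x_i\wedge x_j)$ of the leading $k\times k$ block lies in $[0,f(x_k)]$ --- using exactly the same two ingredients the paper uses, namely $x_i\wedge x_j\preceq x_i$ plus order-preservation, and the standing Section~5 assumption that $f$ is increasing with respect to the indices --- and then bound the Hermitian form by the entrywise estimate $\sum_{i,j}|y_i||y_j|a_{ij}\leq\sum_i|y_i|^2\sum_j a_{ij}\leq k f(x_k)\,\mx{y}^*\mx{y}$. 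This collapses the induction into a one-step computation and makes it transparent that the only thing being used is the uniform entry bound on the principal submatrix; your Frobenius-norm variant makes the same point even more bluntly. Neither argument is more general than the other, but yours is shorter and arguably easier to verify; the one point worth being explicit about in a final write-up is that the passage from $\overline{y_i}y_j$ to $|y_i||y_j|$ requires $a_{ij}\geq 0$, which is where the nonnegativity hypothesis on $f$ enters.
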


\begin{proof}
We apply induction on $k$. In the case when $k=1$ it is rather trivial that
\[
\mx{y}^*(S_n)_f\mx{y}=\overline{y_1}y_1f(x_1)=\mx{y}^*\mx{y}f(x_1),
\]
where $\overline{y_1}$ denotes the complex conjugate of $y_1$. Our induction hypothesis is that the claim holds for $k$ with $1\leq k<n$, and next we  show that the claim also holds for $k+1$.
Let $C_i$ denote the $i$th column of the matrix $(S_n)_f$, and let $\mx{y}\in W_{k+1}$. First we observe that
\[
\mx{y}^*(S_n)_f\mx{y}=\mx{y}^*C_{1}y_{1}+\cdots+\mx{y}^*C_{k}y_{k}+\mx{y}^*C_{k+1}y_{k+1}.
\]
Now let $\mx{z}\in W_k$ such that $z_i=y_i$ for all $i\neq k+1$ and $z_{k+1}=0$. Thus the quadratic form $\mx{z}^*(S_n)_f\mx{z}$ is contained in the previous expression and it can be written as
\begin{align}
\mx{y}^*(S_n)_f\mx{y}=&\mx{y}^*C_{k+1}y_{k+1}+\overline{y_{k+1}}f(x_{k+1}\wedge x_{1})y_{1}+\cdots\notag\\&+\overline{y_{k+1}}f(x_{k+1}\wedge x_{k})y_k+\mx{z}^*(S_n)_f\mx{z}.
\end{align}
Next we start to analyse these terms individually. First of all, the order preserving property of $f$ yields that $0\leq f(x_{k+1}\wedge x_j)\leq f(x_{k+1})$ for all $j=1,\ldots,k$. By also applying the triangle inequality and the simple fact that $|ab|\leq\frac{1}{2}(|a|^2+|b|^2)$ for all $a,b\in\Cset$ we obtain
\begin{align}\label{eq:termi1}
&\left|\mx{y}^*C_{k+1}y_{k+1}\right|\notag\\
&=\left|y_{k+1}\right|\left|\overline{y_{1}}f(x_{k+1}\wedge x_1)+\cdots+\overline{y_{k}}f(x_{k+1}\wedge x_{k})+\overline{y_{k+1}}f(x_{k+1})\right|\notag\\
&\leq \left|y_{k+1}\right|\left(\left|\overline{y_{1}}\right|f(x_{k+1}\wedge x_1)+\cdots+\left|\overline{y_{k}}\right|f(x_{k+1}\wedge x_{k})+\left|\overline{y_{k+1}}\right|f(x_{k+1})\right)\notag\\
&\leq \left|y_{k+1}\right|\left(\left|\overline{y_{1}}\right|+\cdots+\left|\overline{y_{k}}\right|+\left|\overline{y_{k+1}}\right|\right)f(x_{k+1})\notag\\
&\leq\left(\left|y_{k+1}\right|^2+\frac{1}{2}\sum_{i=1}^k\left(\left|y_{k+1}\right|^2+\left|y_{i}\right|^2\right)\right)f(x_{k+1})\notag\\
&=\frac{f(x_{k+1})}{2}\left((k+1)\left|y_{k+1}\right|^2+\mx{y}^*\mx{y}\right).
\end{align}
Very similarly
\begin{align}\label{eq:termi2}
&\left|\overline{y_{k+1}}f(x_{k+1}\wedge x_{1})y_{1}+\cdots+\overline{y_{k+1}}f(x_{k+1}\wedge x_k))y_k\right|\notag\\
&\leq f(x_{k+1})\left|y_{k+1}\right|\left(\left|y_{1}\right|+\cdots+\left|y_{k}\right|\right)\notag\\
&\leq \frac{f(x_{k+1})}{2}\sum_{i=1}^k\left(\left|y_{k+1}\right|^2+\left|y_{i}\right|^2\right)
=\frac{f(x_{k+1})}{2}\left((k-1)\left|y_{k+1}\right|^2+\mx{y}^*\mx{y}\right).
\end{align}
Finally, our induction hypothesis and the increase of $f$ in the set $S_n$ with respect to the indices $i$ yields
\begin{equation}\label{eq:termi3}
\mx{z}^*(S_n)_f\mx{z}\leq k\mx{z}^*\mx{z}f(x_{k})\leq k\mx{z}^*\mx{z}f(x_{k+1}).
\end{equation}
Now, by combining \eqref{eq:termi1}, \eqref{eq:termi2} and \eqref{eq:termi3} we obtain
\begin{align*}
\left|\mx{y}^*(S_n)_f\mx{y}\right|&\leq \frac{f(x_{k+1})}{2}\left((k+1)\left|y_{k+1}\right|^2+\mx{y}^*\mx{y}\right)\notag\\&\hspace{2cm}+\frac{f(x_{k+1})}{2}\left((k-1)\left|y_{k+1}\right|^2+\mx{y}^*\mx{y}\right)+k\mx{z}^*\mx{z}f(x_{k+1})\\
&=f(x_{k+1})\left(\mx{y}^*\mx{y}+\underbrace{k|y_{k+1}|^2+k\mx{z}^*\mx{z}}_{=k\mx{y}^*\mx{y}}\right)=(k+1)f(x_{k+1})\mx{y}^*\mx{y}.
\end{align*}
This completes the proof.
\end{proof}

\begin{lemma}\label{upper bound2}
Let $f:P\to\Rset$ be a function with nonnegative values, and let $V_k$ denote the $k$-dimensional subspace of the complex vector space $\Cset^n$ consisting of vectors that have zero entries in the coordinates at $1,2,\ldots,n-k$ (i.e. $V_k=\mathrm{span}\{\mx{e}_{n-k+1},\mx{e}_{n-k+2},\ldots,\mx{e}_{n}\}$). Let $\mx{y}=[y_1,\ldots,y_n]^T$ be any vector in $V_k$ (that is, $y_1=\cdots=y_{n-k}=0$). If $f$ is order-reversing in $\mathrm{joincl}(S_n)$, then
\begin{equation}\label{eq:lemma2}
\mx{y}^*[S_n]_f\mx{y}\leq k\mx{y}^*\mx{y}f(x_{n-k+1}).
\end{equation}
\end{lemma}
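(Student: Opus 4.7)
The plan is to mirror the proof of Lemma \ref{upper bound} line by line, with meets replaced by joins and the ``bottom'' coordinates $\{1,\dots,k\}$ replaced by the ``top'' coordinates $\{n-k+1,\dots,n\}$. I would proceed by induction on $k$. For the base case $k=1$, any $\mx{y}\in V_1$ has only $y_n$ possibly nonzero, so
\[
\mx{y}^*[S_n]_f\mx{y}=|y_n|^2 f(x_n\vee x_n)=\mx{y}^*\mx{y}\,f(x_n),
\]
which matches the claim since $n-k+1=n$.

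For the inductive step, assume the bound holds for $k$ with $1\leq k<n$ and let $\mx{y}\in V_{k+1}$. The coordinate newly allowed to be nonzero is $y_{n-k}$, so I would let $C_i$ denote the $i$th column of $[S_n]_f$ and introduce the auxiliary vector $\mx{z}\in V_k$ obtained from $\mx{y}$ by zeroing its $(n-k)$th entry. Separating the column indexed by $n-k$ and the row indexed by $n-k$ as in the earlier proof gives the decomposition
\[
\mx{y}^*[S_n]_f\mx{y}=\mx{y}^*C_{n-k}y_{n-k}+\sum_{j=n-k+1}^{n}\overline{y_{n-k}}\,f(x_{n-k}\vee x_j)\,y_j+\mx{z}^*[S_n]_f\mx{z}.
\]

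Each of the three pieces is then bounded as in the proof of Lemma \ref{upper bound}, with the key replacement being that the order-reversing property of $f$ on $\mathrm{joincl}(S_n)$ yields $0\leq f(x_{n-k}\vee x_j)\leq f(x_{n-k})$ for every $j=n-k+1,\dots,n$, since $x_{n-k}\preceq x_{n-k}\vee x_j$. Together with the triangle inequality and $|ab|\leq\tfrac{1}{2}(|a|^2+|b|^2)$, this reproduces the analogues of \eqref{eq:termi1} and \eqref{eq:termi2} with $f(x_{k+1})$ replaced by $f(x_{n-k})$. For the third piece, the induction hypothesis gives $\mx{z}^*[S_n]_f\mx{z}\leq k\mx{z}^*\mx{z}f(x_{n-k+1})$, and since the elements of $S_n$ have been ordered so that $f$ is decreasing with respect to the indices (the natural reordering for an order-reversing $f$, as discussed before the lemma), we have $f(x_{n-k+1})\leq f(x_{n-k})$. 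Adding the three estimates and using $|y_{n-k}|^2+\mx{z}^*\mx{z}=\mx{y}^*\mx{y}$ delivers $|\mx{y}^*[S_n]_f\mx{y}|\leq (k+1)f(x_{n-k})\mx{y}^*\mx{y}$, which closes the induction.

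The main obstacle, such as it is, is purely bookkeeping: one must verify that the index arithmetic (counting ``from the top'' rather than from the bottom), the correct direction of monotonicity of $f$ in the indices (decreasing rather than increasing), and the identification of $f(x_{n-k})$ as the common dominating value in all three estimates are consistently aligned. Once this is done, the argument is structurally identical to that of Lemma \ref{upper bound}, so no new analytic idea is required.
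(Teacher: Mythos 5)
Your proposal is correct and follows exactly the route the paper intends: the paper's own proof of this lemma is just the remark that it is ``very similar to the proof of Lemma \ref{upper bound}'', and your write-up carries out precisely that dualization (joins for meets, indices counted from the top, $f$ decreasing rather than increasing in the indices), with all the index arithmetic and the three estimates checking out. Nothing further is needed.
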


\begin{proof}
The proof is very similar to the proof of Lemma \ref{upper bound} and is essentially the same as Hong and Enoch Lee's proof in \cite[Theorem 2.3]{HE}.
\end{proof}

By applying the Courant-Fischer theorem together with Lemmas \ref{upper bound} and \ref{upper bound2} we are now able to give bounds for the eigenvalues of the matrices $(S_n)_f$ and $[S_n]_f$.

\begin{theorem}\label{ominaisarvoraja}
Let $\lambda^{(n)}_1,\lambda^{(n)}_2,\ldots,\lambda^{(n)}_n$, where $\lambda^{(n)}_1\leq\lambda^{(n)}_2\leq\cdots\leq\lambda^{(n)}_n$, denote the eigenvalues of the matrix $(S_n)_f$. Under the assumptions of Lemma \ref{upper bound} we have
\begin{equation}
\lambda^{(n)}_k\leq kf(x_k)
\end{equation}
for all $k=1,\ldots,n$. Moreover, $f(x_n)\leq\lambda^{(n)}_n$.
\end{theorem}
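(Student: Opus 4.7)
The plan is to invoke the Courant--Fischer min--max characterization of the eigenvalues of a real symmetric matrix and feed it the quadratic-form estimate from Lemma \ref{upper bound}. Since $x_i\wedge x_j=x_j\wedge x_i$, the matrix $(S_n)_f$ is real symmetric, so with the eigenvalues arranged in ascending order we have
\[
\lambda^{(n)}_k=\min_{\substack{V\subseteq\Cset^n\\ \dim V=k}}\;\max_{\substack{\mx{y}\in V\\ \mx{y}\neq\mx{0}}}\frac{\mx{y}^*(S_n)_f\mx{y}}{\mx{y}^*\mx{y}}
\]
for each $k=1,\ldots,n$.

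For the upper bound $\lambda^{(n)}_k\leq kf(x_k)$, I would use the specific $k$-dimensional test subspace $W_k=\mathrm{span}\{\mx{e}_1,\ldots,\mx{e}_k\}$ appearing in Lemma \ref{upper bound}. That lemma gives $\mx{y}^*(S_n)_f\mx{y}\leq kf(x_k)\,\mx{y}^*\mx{y}$ for every $\mx{y}\in W_k$, so the inner maximum of the Rayleigh quotient over $W_k$ is bounded by $kf(x_k)$. Since $\lambda^{(n)}_k$ is the minimum of this inner maximum over all $k$-dimensional subspaces of $\Cset^n$, restricting attention to the single subspace $W_k$ already yields the desired inequality.

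For the lower bound $f(x_n)\leq\lambda^{(n)}_n$, I would simply test the Rayleigh quotient for the largest eigenvalue at the standard basis vector $\mx{y}=\mx{e}_n$. This gives
\[
\lambda^{(n)}_n=\max_{\mx{y}\neq\mx{0}}\frac{\mx{y}^*(S_n)_f\mx{y}}{\mx{y}^*\mx{y}}\geq\frac{\mx{e}_n^*(S_n)_f\mx{e}_n}{\mx{e}_n^*\mx{e}_n}=f(x_n\wedge x_n)=f(x_n),
\]
which is the second claim.

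All of the hard analytic work is already packaged inside Lemma \ref{upper bound}; once that estimate is in hand, Courant--Fischer applied to the single subspace $W_k$ plus a single test vector $\mx{e}_n$ turns the theorem into a short argument, so I do not anticipate any real obstacle. The only mild point requiring care is to match the dimension $k$ of the test subspace $W_k$ with the correct index $k$ in the min--max formula, which is exactly how Lemma \ref{upper bound} has been set up.
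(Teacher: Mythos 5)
Your proposal is correct and follows essentially the same route as the paper: both apply the Courant--Fischer theorem with the specific test subspace $W_k$ (the paper phrases it via orthogonality to $\mx{e}_{k+1},\ldots,\mx{e}_n$, you via minimizing over $k$-dimensional subspaces, which is the same statement) and both obtain the lower bound $f(x_n)\leq\lambda^{(n)}_n$ from the Rayleigh quotient at $\mx{e}_n$. No gaps.
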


\begin{proof}
Let $1\leq k\leq n.$ By applying Lemma \ref{upper bound} and Courant-Fischer theorem (\cite[Theorem 4.2.11]{HJ}) we obtain
\begin{align*}
kf(x_k)&\geq \max_{\mx{0}\neq\mx{y}\in W_k}\frac{\mx{y}^*(S_n)_f\mx{y}}{\mx{y}^*\mx{y}}=\max_{\mx{0}\neq\mx{y}\perp \mx{e}_{k+1},\ldots,\mx{e}_n}\frac{\mx{y}^*(S_n)_f\mx{y}}{\mx{y}^*\mx{y}}\\
&\geq \min_{\mx{w}_{1},\mx{w}_{2},\ldots,\mx{w}_{n-k}\in\Cset^n}\left(\max_{\mx{0}\neq\mx{y}\perp \mx{w}_{1},\mx{w}_2,\ldots,\mx{w}_{n-k}}\frac{\mx{y}^*(S_n)_f\mx{y}}{\mx{y}^*\mx{y}}\right)=\lambda^{(n)}_k.
\end{align*}
The rest of the claim follows from the Rayleigh-Ritz theorem (\cite[Theorem 4.2.2]{HJ}) by setting $\mx{y}=\mx{e}_n$, since
\[
\lambda^{(n)}_{n}=\max_{\mx{y}\neq\mx{0}}\frac{\mx{y}^*(S_n)_f\mx{y}}{\mx{y}^*\mx{y}}\geq\frac{\mx{e}_n^*(S_n)_f\mx{e}_n}{\mx{e}_n^*\mx{e}_n}=f(x_n).
\]
\end{proof}

\begin{theorem}\label{Hongtheorem}
Let $\hat{\lambda}^{(n)}_1,\hat{\lambda}^{(n)}_2,\ldots,\hat{\lambda}^{(n)}_n$, where $\hat{\lambda}^{(n)}_1\leq\hat{\lambda}^{(n)}_2\leq\cdots\leq\hat{\lambda}^{(n)}_n$, denote the eigenvalues of the matrix $[S_n]_f$. Under the assumptions of Lemma \ref{upper bound2} we have
\begin{equation}
\hat{\lambda}^{(n)}_k\leq kf(x_{n-k+1})
\end{equation}
for all $k=1,\ldots,n$. In addition, $f(x_1)\leq\hat{\lambda}^{(n)}_n$.
\end{theorem}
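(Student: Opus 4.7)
The plan is to mirror the proof of Theorem \ref{ominaisarvoraja}, swapping Lemma \ref{upper bound} for Lemma \ref{upper bound2} and replacing the subspace $W_k$ by $V_k=\mathrm{span}\{\mx{e}_{n-k+1},\ldots,\mx{e}_n\}$. The geometric point is that $V_k$ is precisely the orthogonal complement of $\mx{e}_1,\ldots,\mx{e}_{n-k}$, so it is admissible as the "$n-k+1$-st through $n$-th" slice that appears in the min-max formulation of Courant-Fischer.

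Concretely, fix $1\leq k\leq n$. Lemma \ref{upper bound2} gives
\[
\max_{\mx{0}\neq\mx{y}\in V_k}\frac{\mx{y}^*[S_n]_f\mx{y}}{\mx{y}^*\mx{y}}\leq kf(x_{n-k+1}).
\]
Then the Courant-Fischer theorem \cite[Theorem 4.2.11]{HJ}, applied in the form
\[
\hat{\lambda}^{(n)}_k=\min_{\mx{w}_1,\ldots,\mx{w}_{n-k}\in\Cset^n}\left(\max_{\mx{0}\neq\mx{y}\perp\mx{w}_1,\ldots,\mx{w}_{n-k}}\frac{\mx{y}^*[S_n]_f\mx{y}}{\mx{y}^*\mx{y}}\right),
\]
yields the first inequality by choosing the particular test vectors $\mx{w}_j=\mx{e}_j$ for $j=1,\ldots,n-k$, which forces $\mx{y}\in V_k$ and hence gives $\hat{\lambda}^{(n)}_k\leq kf(x_{n-k+1})$.

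For the lower bound on the top eigenvalue, I would invoke the Rayleigh-Ritz theorem \cite[Theorem 4.2.2]{HJ} with the test vector $\mx{y}=\mx{e}_1$. Since $x_1\vee x_1=x_1$, we get
\[
\hat{\lambda}^{(n)}_n=\max_{\mx{y}\neq\mx{0}}\frac{\mx{y}^*[S_n]_f\mx{y}}{\mx{y}^*\mx{y}}\geq\frac{\mx{e}_1^*[S_n]_f\mx{e}_1}{\mx{e}_1^*\mx{e}_1}=f(x_1),
\]
completing the proof.

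There is no real obstacle here, as the argument is fully dual to that of Theorem \ref{ominaisarvoraja}; the only delicate point is bookkeeping with indices, ensuring that the orthogonality constraints $\mx{y}\perp\mx{e}_1,\ldots,\mx{e}_{n-k}$ correctly identify $V_k$ (rather than $W_k$) as the admissible subspace and that the value $f(x_{n-k+1})$ appearing on the right of Lemma \ref{upper bound2} is the one triggered by the order-reversing and reverse-indexing assumptions listed before Lemma \ref{upper bound2}. Once this is checked, the proof reduces to two short applications of the minimax and maximum characterizations of eigenvalues.
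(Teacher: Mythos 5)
Your proof is correct and is exactly the argument the paper intends: the paper's own proof of this theorem simply says it is similar to that of Theorem \ref{ominaisarvoraja}, and your write-up carries out that dualization faithfully, with the right identification of $V_k$ as the orthogonal complement of $\mx{e}_1,\ldots,\mx{e}_{n-k}$ in Courant--Fischer and the correct test vector $\mx{e}_1$ in Rayleigh--Ritz.
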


\begin{proof}
The proof is similar to the proof of Theorem \ref{ominaisarvoraja}.
\end{proof}

\begin{example}
Let $\alpha\in\Rset^+$, $S_n=\{x_1,\ldots,x_n\}\subset\Zset^+$ and $f:\Zset^+\to\Rset$ be the function such that $f(n)=n^{\alpha}$ for all $n\in\Zset^+$. Let $(S_n)_f$ denote the power GCD matrix having $(x_i,x_j)^\alpha$ as its $ij$ element, and let $(S_n)_f^{**}$ denote the power GCUD matrix having $((x_i,x_j)^{**})^\alpha$, the power of the greatest common unitary divisor of $x_i$ and $x_j$ as its $ij$ element ($d$ divides $x_i$ unitarily if $d\,|\,x_i$ and $(d,x_i/d)=1$). Both these matrices fulfill the assumptions of Lemma \ref{upper bound}, and therefore by Theorem \ref{ominaisarvoraja} $kf(x_k)=kx_k^\alpha$ is an upper bound for the $k$th largest eigenvalue of both $(S_n)_f$ and $(S_n)_f^{**}$. Moreover, $f(x_n)=x_n^{\alpha}$ is a lower bound for the largest eigenvalue of both $(S_n)_f$ and $(S_n)_f^{**}$.
\end{example}

\begin{example}[\cite{HE},Theorem 2.3]
Let $\alpha\in\Rset^+$, $S_n=\{x_1,\ldots,x_n\}\subset\Zset^+$ and $f:\Zset^+\to\Rset$ be the function such that $f(n)=\frac{1}{n^{\alpha}}$ for all $n\in\Zset^+$. In this case the matrix $[S_n]_f$ having $\frac{1}{[x_i,x_j]^\alpha}$ as its $ij$ element is referred to as the reciprocal power LCM matrix of the set $S_n$. Let $\lambda_k^{(n)}$ denote the $k$th largest eigenvalue of the matrix $[S_n]_f$. Thus by Theorem \ref{Hongtheorem} we have
\[
\lambda_k^{(n)}\leq kf(x_{n-k+1})=\frac{k}{x_{n-k+1}^\alpha}.
\]
In addition, $f(x_1)=\frac{1}{x_1^\alpha}\leq\lambda_n^{(n)}$.
\end{example}

\end{document}